\begin{document}


\long\def\FOOTNOTE#1{ \ignorespaces}
\long\def\FOOTNOTE#1{\footnote{\emph{Added Note}: #1}}

\newcommand{\TITLE}{Character Sums with Division Polynomials}
\newcommand{\TITLERUNNING}{Character sums with division polynomials}
\newcommand{\DATE}{\today}
\newcommand{\VERSION}{1}

\theoremstyle{plain} 
\newtheorem{theorem}{Theorem} 
\newtheorem{conjecture}[theorem]{Conjecture}
\newtheorem{proposition}[theorem]{Proposition}
\newtheorem{lemma}[theorem]{Lemma}
\newtheorem{corollary}[theorem]{Corollary}

\theoremstyle{definition}
\newtheorem*{definition}{Definition}

\theoremstyle{remark}
\newtheorem{remark}[theorem]{Remark}
\newtheorem{example}[theorem]{Example}
\newtheorem{question}[theorem]{Question}
\newtheorem*{acknowledgement}{Acknowledgements}

\def\BigStrut{\vphantom{$(^{(^(}_{(}$}} 

\newenvironment{notation}[0]{%
  \begin{list}%
    {}%
    {\setlength{\itemindent}{0pt}
     \setlength{\labelwidth}{4\parindent}
     \setlength{\labelsep}{\parindent}
     \setlength{\leftmargin}{5\parindent}
     \setlength{\itemsep}{0pt}
     }%
   }%
  {\end{list}}

\newenvironment{parts}[0]{%
  \begin{list}{}%
    {\setlength{\itemindent}{0pt}
     \setlength{\labelwidth}{1.5\parindent}
     \setlength{\labelsep}{.5\parindent}
     \setlength{\leftmargin}{2\parindent}
     \setlength{\itemsep}{0pt}
     }%
   }%
  {\end{list}}
\newcommand{\Part}[1]{\item[\upshape#1]}

%
\newcommand{\EndProofAtDisplay}{\renewcommand{\qedsymbol}{}}
\newcommand{\qedtag}{\tag*{\qedsymbol}}

\renewcommand{\a}{\alpha}
\renewcommand{\b}{\beta}
\newcommand{\g}{\gamma}
\renewcommand{\d}{\delta}
\newcommand{\e}{\epsilon}
\newcommand{\f}{\phi}
\newcommand{\fhat}{{\hat\phi}}
\renewcommand{\l}{\lambda}
\renewcommand{\k}{\kappa}
\newcommand{\lhat}{\hat\lambda}
\newcommand{\m}{\mu}
\renewcommand{\o}{\omega}
\renewcommand{\r}{\rho}
\newcommand{\rbar}{{\bar\rho}}
\newcommand{\s}{\sigma}
\newcommand{\sbar}{{\bar\sigma}}
\renewcommand{\t}{\tau}
\newcommand{\z}{\zeta}

\newcommand{\D}{\Delta}
\newcommand{\F}{\Phi}
\newcommand{\G}{\Gamma}

\newcommand{\ga}{{\mathfrak{a}}}
\newcommand{\gb}{{\mathfrak{b}}}
\newcommand{\gc}{{\mathfrak{c}}}
\newcommand{\gd}{{\mathfrak{d}}}
\newcommand{\gk}{{\mathfrak{k}}}
\newcommand{\gm}{{\mathfrak{m}}}
\newcommand{\gn}{{\mathfrak{n}}}
\newcommand{\gp}{{\mathfrak{p}}}
\newcommand{\gq}{{\mathfrak{q}}}
\newcommand{\gI}{{\mathfrak{I}}}
\newcommand{\gJ}{{\mathfrak{J}}}
\newcommand{\gK}{{\mathfrak{K}}}
\newcommand{\gM}{{\mathfrak{M}}}
\newcommand{\gN}{{\mathfrak{N}}}
\newcommand{\gP}{{\mathfrak{P}}}
\newcommand{\gQ}{{\mathfrak{Q}}}

\def\Acal{{\mathcal A}}
\def\Bcal{{\mathcal B}}
\def\Ccal{{\mathcal C}}
\def\Dcal{{\mathcal D}}
\def\Ecal{{\mathcal E}}
\def\Fcal{{\mathcal F}}
\def\Gcal{{\mathcal G}}
\def\Hcal{{\mathcal H}}
\def\Ical{{\mathcal I}}
\def\Jcal{{\mathcal J}}
\def\Kcal{{\mathcal K}}
\def\Lcal{{\mathcal L}}
\def\Mcal{{\mathcal M}}
\def\Ncal{{\mathcal N}}
\def\Ocal{{\mathcal O}}
\def\Pcal{{\mathcal P}}
\def\Qcal{{\mathcal Q}}
\def\Rcal{{\mathcal R}}
\def\Scal{{\mathcal S}}
\def\Tcal{{\mathcal T}}
\def\Ucal{{\mathcal U}}
\def\Vcal{{\mathcal V}}
\def\Wcal{{\mathcal W}}
\def\Xcal{{\mathcal X}}
\def\Ycal{{\mathcal Y}}
\def\Zcal{{\mathcal Z}}

\renewcommand{\AA}{\mathbb{A}}
\newcommand{\BB}{\mathbb{B}}
\newcommand{\CC}{\mathbb{C}}
\newcommand{\FF}{\mathbb{F}}
\newcommand{\GG}{\mathbb{G}}
\newcommand{\KK}{\mathbb{K}}
\newcommand{\NN}{\mathbb{N}}
\newcommand{\PP}{\mathbb{P}}
\newcommand{\QQ}{\mathbb{Q}}
\newcommand{\RR}{\mathbb{R}}
\newcommand{\ZZ}{\mathbb{Z}}

\def \bfa{{\mathbf a}}
\def \bfb{{\mathbf b}}
\def \bfc{{\mathbf c}}
\def \bfe{{\mathbf e}}
\def \bff{{\mathbf f}}
\def \bfF{{\mathbf F}}
\def \bfg{{\mathbf g}}
\def \bfn{{\mathbf n}}
\def \bfp{{\mathbf p}}
\def \bfr{{\mathbf r}}
\def \bfs{{\mathbf s}}
\def \bft{{\mathbf t}}
\def \bfu{{\mathbf u}}
\def \bfv{{\mathbf v}}
\def \bfw{{\mathbf w}}
\def \bfx{{\mathbf x}}
\def \bfy{{\mathbf y}}
\def \bfz{{\mathbf z}}
\def \bfX{{\mathbf X}}
\def \bfU{{\mathbf U}}
\def \bfmu{{\boldsymbol\mu}}

\newcommand{\Gbar}{{\bar G}}
\newcommand{\Kbar}{{\bar K}}
\newcommand{\kbar}{{\bar k}}
\newcommand{\Obar}{{\bar O}}
\newcommand{\Pbar}{{\bar P}}
\newcommand{\Rbar}{{\bar R}}
\newcommand{\Qbar}{{\bar{\QQ}}}


\newcommand{\Aut}{\operatorname{Aut}}
\newcommand{\Ctilde}{{\tilde C}}
\newcommand{\defeq}{\stackrel{\textup{def}}{=}}
\newcommand{\Disc}{\operatorname{Disc}}
\renewcommand{\div}{\operatorname{div}}
\newcommand{\Div}{\operatorname{Div}}
\newcommand{\et}{{\text{\'et}}}  
\newcommand{\Etilde}{{\tilde E}}
\newcommand{\End}{\operatorname{End}}
\newcommand{\EDS}{\mathsf{W}}
\newcommand{\Fix}{\operatorname{Fix}}
\newcommand{\Frob}{\operatorname{Frob}}
\newcommand{\Gal}{\operatorname{Gal}}
\newcommand{\GCD}{{\operatorname{GCD}}}
\renewcommand{\gcd}{{\operatorname{gcd}}}
\newcommand{\GL}{\operatorname{GL}}
\newcommand{\hhat}{{\hat h}}
\newcommand{\Hom}{\operatorname{Hom}}
\newcommand{\Ideal}{\operatorname{Ideal}}
\newcommand{\Image}{\operatorname{Image}}
\newcommand{\Jac}{\operatorname{Jac}}
\newcommand{\longhookrightarrow}{\lhook\joinrel\relbar\joinrel\rightarrow}
\newcommand{\MOD}[1]{~(\textup{mod}~#1)}
\renewcommand{\pmod}{\MOD}
\newcommand{\Norm}{\operatorname{N}}
\newcommand{\NS}{\operatorname{NS}}
\newcommand{\notdivide}{\nmid}
\newcommand{\OK}[1]{\Ocal_{K,#1}^{\,\sharp}}
\newcommand{\longonto}{\longrightarrow\hspace{-10pt}\rightarrow}
\newcommand{\onto}{\rightarrow\hspace{-10pt}\rightarrow}
\newcommand{\ord}{\operatorname{ord}}
\newcommand{\Parity}{\operatorname{Parity}}
\newcommand{\pequiv}{\stackrel{pr}{\equiv}}
\newcommand{\Pic}{\operatorname{Pic}}
\newcommand{\Prob}{\operatorname{Prob}}
\newcommand{\Proj}{\operatorname{Proj}}
\newcommand{\rank}{\operatorname{rank}}
\newcommand{\res}{\operatornamewithlimits{res}}
\newcommand{\Resultant}{\operatorname{Resultant}}
\renewcommand{\setminus}{\smallsetminus}
\newcommand{\stilde}{{\tilde\sigma}}
\newcommand{\sign}{\operatorname{Sign}}
\newcommand{\Spec}{\operatorname{Spec}}
\newcommand{\Support}{\operatorname{Support}}
\newcommand{\tors}{{\textup{tors}}}
\newcommand{\Tr}{\operatorname{Tr}}
\newcommand{\Trace}{\operatorname{Tr}}
\newcommand\W{W^{\vphantom{1}}}
\newcommand{\Wtilde}{{\widetilde W}}
\newcommand{\<}{\langle}
\renewcommand{\>}{\rangle}
\newcommand{\semiquad}{\hspace{.5em}}

\newcommand{\CS}[2]{\genfrac(){}{}{#1}{#2}_3}  
\newcommand{\FCS}[2]{\genfrac[]{}{}{#1}{#2}_3}  
\newcommand{\LS}[2]{\genfrac(){}{}{#1}{#2}}  
\newcommand{\FLS}[2]{\genfrac[]{}{}{#1}{#2}}  
\renewcommand{\SS}[2]{\genfrac(){}{}{#1}{#2}_6}  

\let\hw\hidewidth

\hyphenation{para-me-tri-za-tion}

\newcommand{\comm}[1]{\marginpar{%
\vskip-\baselineskip 
\raggedright\footnotesize
\itshape\hrule\smallskip#1\par\smallskip\hrule}}

\def\({\left(}
\def\){\right)}
\def\<{\langle}
\def\>{\rangle}
\def\fl#1{\left\lfloor#1\right\rfloor}
\def\rf#1{\left\lceil#1\right\rceil}
\def\e{\mathbf{e}}


\title[\TITLERUNNING]{\TITLE}
\date{\today}
\author{Igor E Shparlinski}
\address{Department of Computing, Macquarie University, 
North Ryde, Sydney, NSW 2109, Australia}
\email{igor@comp.mq.edu.au} 

\author{Katherine E. Stange} 
\address{Department of Mathematics, Stanford University, 450 Serra Mall, Building 380, Stanford, CA 94305, USA}
\email{stange@math.stanford.edu}

\subjclass{11L40, 14H52}

\keywords{division polynomial, character sum}


\begin{abstract} 
We obtain nontrivial estimates of quadratic character sums of 
division polynomials $\Psi_n(P)$, $n=1,2, \ldots$, 
evaluated at a given point $P$ on an elliptic curve over a finite field
of $q$ elements.  
Our bounds are nontrivial if the order of $P$ 
is at least $q^{1/2 + \varepsilon}$ for some fixed $\varepsilon > 0$.
This work is motivated by an open 
question about statistical indistinguishability of some cryptographically 
relevant sequences which has recently been brought up by 
K.~Lauter and the second author. 
\end{abstract}

\maketitle

\section{Division Polynomials and Character Sums}

Let $E$ be an elliptic curve over a finite field $\FF_q$ of characteristic $p>3$.  Denote by $E(\FF_q)$ the group of
points of $E$ defined over $\FF_q$.  We refer to~\cite{Silv1} for
background on elliptic curves. 

Let $\Psi_n$ be the $n$-th division polynomial for 
positive integers $n$.  
For a given point $P \in E(\FF_q)$, the sequence $\Psi_n(P)$ is often called an \emph{elliptic divisibility sequence}.  It satisfies the following recurrence relation \cite[Exercise~3.34]{Silv1}
\begin{equation}
\begin{split}
\label{eqn:rec}
\Psi_{h+i}(P)\Psi_{h-i}(P)&\Psi_{j}(P)^2 \\
+~\Psi_{i+j}(P)&\Psi_{i-j}(P)\Psi_{h}(P)^2\\
+~&\Psi_{j+h}(P)\Psi_{j-h}(P)\Psi_{i}(P)^2
=0
\end{split}
\end{equation}

By definition, $\Psi_n(P) = 0$ if and only if $[n]P = 0$.  Further, the sequence $\Psi_n(P)$ is necessarily periodic with some period $T$ and $T$ is always a multiple  
of the order of $P$ (see Lemma~\ref{lem:parper} below).  For 
 background on elliptic divisibility sequences, 
see~\cite{EvdPSW,Swar,Ward}.

Note that  elliptic divisibility sequences can be viewed 
as a generalisation of Lucas sequences. We recall that 
Lucas sequences (of the first kind) are sequences satisfying a recurrence of the form
\[
L_n = a L_{n-1} + b L_{n-2}, \quad L_0=0, \quad L_1=1,
\]
in given coefficients $a$ and $b$.  Lucas sequences, including Fibonacci numbers, satisfy~\eqref{eqn:rec} after an appropriate scaling (multiplication of the $n$-th term by $\lambda^{n^2-1}$ for some 
$\lambda$); see~\cite[Exercise~3.34]{Silv1} and~\cite[Section~VI]{Ward}.   

In this paper, for a fixed point $P \in E(\FF_q)$, and 
an integer $N \le T$, 
we obtain nontrivial estimates of sums of the form
$$
S_P(N) = \sum_{n =1}^N \chi\(\Psi_n(P)\),
$$
where $\chi$ is the quadratic character of $\FF_q$
(as usual, we set  $\chi(0)=0$). 
Character sums with linear recurrence sequences have 
been studied in~\cite{Shp}.  See also~\cite[Chapter~5]{EvdPSW} for a survey of 
estimates of exponential and character sums with various recurrence
sequences.
However, to our knowledge, for  elliptic divisibility sequences no results have been obtained prior to this work.

\section{Motivation}

This question also has a cryptographic connection. 
In~\cite{LautStan} the following  \emph{elliptic divisibility sequence residue problem}  has been considered:  given two points $P, Q \in E(\FF_q)$ such that $Q \in \langle P \rangle$, $Q \neq \Ocal$ and $\ord(P) \ge 4$, calculate $\chi( \Psi_k(P) )$ for the smallest positive $k$ such that $Q = [k]P$.  To find $k$ given the points $P$ and $Q$ is the well-known \emph{elliptic curve discrete logarithm problem} and its assumed difficulty is the basis of elliptic curve cryptography.  To solve the residue problem it certainly suffices to solve the discrete logarithm problem.  However, it may be possible to solve the residue problem without first calculating $k$.  It is shown in~\cite[Theorem~1.1]{LautStan}
that solving either of these problems in subexponential time leads 
to a solution of the other in subexponential time. 
For perspective, the  calculation of $\chi( \Psi_{k+1}(P)/\Psi_k(P) )$ takes only polynomial time from $P$ and $Q$, and does not 
reveal $k$, see~\cite[Section~8]{LautStan}.  This has raised the general question of what can be said about the residuosity of $\Psi_n(P)$.
More specifically, it has been shown in~\cite{LautStan} that 
the difficulty of a certain distinguishability problem of 
cryptographic interest depends on the bias between the quadratic 
residues and nonresidues amongst consecutive 
terms of the sequence $\Psi_n(P)$, $n=1, \ldots, N$, 
which is in turn is equivalent to estimating the sums $S_P(N)$.

%



\section{Prerequisites concerning division polynomials}

We recall some classical results, the first of which describes the ratio 
$\Psi_{n+r}(P)/\Psi_n(P)$.

By~\cite[Theorem~8]{Silv2} (see also~\cite[Theorem~8.1]{Ward}), we have:

\begin{lemma}
\label{lem:parper}
Let $P \in E(\FF_q)$ be of order $r\ge 3$.  Then for all positive $s, k \in \ZZ$,
\[
\Psi_{sr+k}(P) = a^{ks}b^{s^2} \Psi_k(P),
\]
where $a$ and $b$ are given by
\[
a = \frac{\Psi_{r-2}(P)}{\Psi_{r-1}(P)\Psi_2(P)}, \qquad b = \frac{\Psi_{r-1}(P)^2\Psi_2(P)}{\Psi_{r-2}(P)}.
\]
\end{lemma}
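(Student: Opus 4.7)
The plan is to prove the identity by induction on $s$, with the base case $s=1$ absorbing the bulk of the work.  For $s=1$, the assertion is $\Psi_{r+k}(P) = a^k b\,\Psi_k(P)$ for every positive integer $k$.  The natural starting point is the substitution $h=r$, $i=k$, $j=1$ into the recurrence~\eqref{eqn:rec}.  Using $\Psi_r(P)=0$ and $\Psi_{-n}(P) = -\Psi_n(P)$, the three-term identity collapses to the symmetric relation
\[
\Psi_{r+k}(P)\,\Psi_{r-k}(P) = \Psi_{r+1}(P)\,\Psi_{r-1}(P)\,\Psi_k(P)^2,\qquad k\ge 1.
\]
Writing $u_k = \Psi_{r+k}(P)/\Psi_k(P)$ and $v_k = \Psi_{r-k}(P)/\Psi_k(P)$, this says the product $u_k v_k$ is independent of $k$.

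To isolate $u_k$ itself, I would feed a second substitution into~\eqref{eqn:rec}---for example $h=r+k$, $i=r$, $j=1$, which again kills one of the three terms via $\Psi_r(P)=0$---to obtain a multiplicative relation between consecutive $u_k$.  Combining the two relations forces $u_{k+1}/u_k$ to be a constant $a\in\FF_q^*$, so that $u_k = a^k b$ for some $b\in\FF_q^*$.  The explicit expressions for $a$ and $b$ in the statement then fall out by specialising at the indices $k=r-1$ and $k=r-2$ and solving the resulting two-variable system, using~\eqref{eqn:rec} once more to evaluate $\Psi_{2r-1}(P)$ and $\Psi_{2r-2}(P)$ in closed form.

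For the inductive step $s\to s+1$, I would apply the base case inside $\Psi_{(s+1)r+k}(P) = \Psi_{sr+(r+k)}(P)$, invoking the inductive hypothesis with $r+k$ in place of $k$ and then the $s=1$ identity to simplify $\Psi_{r+k}(P)$.  Collecting exponents yields $a^{rs + k(s+1)}b^{s^2+1}\Psi_k(P)$, which matches the target $a^{k(s+1)}b^{(s+1)^2}\Psi_k(P)$ if and only if the consistency relation $a^r = b^2$ holds.  This reduces, through the explicit formulas for $a$ and $b$, to an algebraic identity among $\Psi_{r-2}(P)$, $\Psi_{r-1}(P)$, $\Psi_{r+1}(P)$, and $\Psi_2(P)$ that can be extracted from a further application of~\eqref{eqn:rec}.

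The chief obstacle is the extraction of the closed form $u_k = a^k b$ in the base case, which hinges on finding the right pair of substitutions into the three-term recurrence~\eqref{eqn:rec} so that the nonlinear relations collapse to a clean multiplicative recurrence.  Once the $s=1$ identity is in hand, the consistency check $a^r = b^2$ and the induction on $s$ are formal algebraic consequences.
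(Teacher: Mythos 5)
The paper never proves this lemma---it is simply quoted from~\cite{Silv2} (see also~\cite{Ward})---so any derivation from~\eqref{eqn:rec} is by necessity your own, and your skeleton (show that $u_k=\Psi_{r+k}(P)/\Psi_k(P)$ is a geometric progression, then induct on $s$, the induction closing exactly when $a^r=b^2$) is indeed the shape of Ward's symmetry argument. However, the concrete steps do not do what you claim. The substitution $(h,i,j)=(r+k,r,1)$ does not compare consecutive $u_k$: since $\Psi_r(P)=0$ it gives $\Psi_{2r+k}(P)\Psi_k(P)=-\Psi_{r+1}(P)\Psi_{r-1}(P)\Psi_{r+k}(P)^2$, a relation between the $s=2$ and $s=1$ layers. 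The substitution you want is $(h,i,j)=(r+m,m,1)$, which yields $\Psi_{m+1}\Psi_{m-1}\Psi_{r+m}^2=\Psi_{r+m+1}\Psi_{r+m-1}\Psi_m^2$ at $P$, i.e.\ $u_{m+1}u_{m-1}=u_m^2$, but only when $\Psi_{m-1}(P),\Psi_m(P),\Psi_{m+1}(P)\neq0$; since your inductive step invokes the base case at the shifted index $r+k$ for arbitrary $k$, you must also explain how $u_k=a^kb$ propagates across indices divisible by $r$, where this three-term relation degenerates. Finally, $a^r=b^2$ is not ``a further application of~\eqref{eqn:rec}'': it requires identifying $\Psi_{r-1}(P)$, $\Psi_{r-2}(P)$ in terms of $a$ and $b$ (the backward symmetry $\Psi_{r-k}(P)=-a^{-k}b\,\Psi_k(P)$), which your sketch never establishes.

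More seriously, the explicit formulas you set out to recover are not correct as printed, so the step ``specialise at $k=r-1,r-2$ and solve'' cannot return them. Your own base case at $k=1,2$ forces $ab=\Psi_{r+1}(P)$ and $a^2b=\Psi_{r+2}(P)/\Psi_2(P)$, hence $a=\Psi_{r+2}(P)/(\Psi_{r+1}(P)\Psi_2(P))$ and $b=\Psi_{r+1}(P)^2\Psi_2(P)/\Psi_{r+2}(P)$; the printed expressions, with $r-1,r-2$ in place of $r+1,r+2$, differ from these in general. Concretely, take $E:y^2+2xy+y=x^3+x^2$ over $\FF_7$ (discriminant $-11$, so nonsingular) and $P=(0,0)$ of order $r=5$: one computes $\Psi_1(P),\dots,\Psi_5(P)=1,1,1,1,0$, and then~\eqref{eqn:rec} with $(h,i,j)=(4,2,1)$ gives $\Psi_6(P)=-\Psi_3(P)\Psi_4(P)^2=-1$. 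The printed formulas give $a=b=1$, so the statement would force $\Psi_6(P)=ab\,\Psi_1(P)=1$, a contradiction; the correct pair here is $(a,b)=(1,-1)$, matching the forward-index formulas above. (This misquotation is harmless for the paper, since Lemma~\ref{lem: per chi} and everything downstream use only the shape $\Psi_{sr+k}=a^{ks}b^{s^2}\Psi_k$, not the particular values of $a$ and $b$.) So the statement must first be corrected; with the corrected constants your outline can be completed, but the points above---the right substitution, the crossing of multiples of $r$, and an honest proof of $a^r=b^2$---are exactly where the remaining work lies.
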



Furthermore, by~\cite[Lemma~6]{Silv2}, we also have:

\begin{lemma}
\label{lem: nm}
Let $n$ and $m$ be positive integers.  Then
$$
\Psi_{nm}(P) = \Psi_{n}([m]P)\Psi_{m}(P)^{n^2}.
$$
\end{lemma}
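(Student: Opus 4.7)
The plan is to reduce the identity to characteristic zero, where it can be verified directly via the Weierstrass sigma function. First I would observe that $\Psi_n$ is a universal polynomial in the coordinates of $P$ with coefficients in $\ZZ[a,b]$, where $a$ and $b$ are the Weierstrass coefficients of $E$. Both sides of the claimed identity are therefore elements of the same universal ring, and so it suffices to establish the identity over $\CC$; the identity over $\FF_q$ (with $p>3$, as fixed throughout the paper) then follows by specialisation.

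Over $\CC$ the key input is the classical formula expressing $\Psi_n$ in terms of the Weierstrass sigma function: if $P$ corresponds to the parameter $u \in \CC/\Lambda$ on the complex analytic model of $E$, then
\[
\Psi_n(P) = \frac{\sigma(nu;\Lambda)}{\sigma(u;\Lambda)^{n^2}}.
\]
Since $[m]P$ corresponds to the parameter $mu$, applying this formula to both sides gives
\[
\Psi_{nm}(P) = \frac{\sigma(nmu)}{\sigma(u)^{(nm)^2}}
\]
on the left-hand side, and on the right-hand side
\[
\Psi_n([m]P)\,\Psi_m(P)^{n^2} = \frac{\sigma(nmu)}{\sigma(mu)^{n^2}} \cdot \frac{\sigma(mu)^{n^2}}{\sigma(u)^{n^2 m^2}} = \frac{\sigma(nmu)}{\sigma(u)^{n^2 m^2}},
\]
so the two sides coincide.

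The main obstacle is really organisational rather than mathematical: one has to verify that the sigma-function formula holds with the correct normalisation so that $\Psi_n(P)$ matches the integer-coefficient division polynomial used throughout the paper, and then invoke the universality argument to transfer the identity from $\CC$ to $\FF_q$. An alternative, more elementary route would be induction on $\max(n,m)$ using the recurrence~\eqref{eqn:rec} together with Lemma~\ref{lem:parper}, splitting into the cases of even and odd indices; however, this leads to a more extensive case analysis and is considerably less transparent than the analytic argument sketched above.
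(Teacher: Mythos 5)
Your argument is essentially correct, but it takes a different route from the paper for the simple reason that the paper offers no proof at all: Lemma~\ref{lem: nm} is quoted directly from Silverman's paper on $p$-adic properties of division polynomials (\cite[Lemma~6]{Silv2}). What you supply is a self-contained proof along classical lines: the identity $\Psi_n(P)=\sigma(nu)/\sigma(u)^{n^2}$ makes the relation $\Psi_{nm}(P)=\Psi_n([m]P)\Psi_m(P)^{n^2}$ an immediate telescoping of sigma quotients, and the universality of the division polynomials (coefficients in $\ZZ[a,b]$, curve equation with $p>3$) transfers it to $\FF_q$. Two points deserve a little more care than your sketch gives them. First, the object $\Psi_n([m]P)$ is the composition of $\Psi_n$ with the rational map $[m]$, so the identity you are specialising is an identity in the function field of the generic curve over $\QQ(a,b)$; after clearing the denominators coming from $[m]$ you must argue that the resulting polynomial identity holds in $\ZZ[a,b][x,y]$ modulo the curve equation, which follows from its validity at the Zariski-dense set of complex specialisations with nonzero discriminant, but this is exactly the step you should spell out rather than wave at. Second, the identity as an equality of \emph{values} only makes sense when $[m]P\neq\Ocal$, since $\Psi_n$ has a pole at $\Ocal$; this is harmless for the paper's application (there $\chi(0)=0$ kills the offending terms), but a complete proof should either state the identity in the function field or exclude that case. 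Your alternative suggestion of an induction via the recurrence~\eqref{eqn:rec} would indeed work but, as you say, is substantially messier; the sigma-function argument is the standard and cleaner one, and is in the spirit of how such identities are established in the sources the paper cites (Ward, Silverman).
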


We remark that in general, for $P \in E(\FF_q)$  of order $r\ge 3$, 
the period $T$ of the the sequence $\Psi_n(P)$ 
may be as large as $r(q-1)$, 
see~\cite[Corollary~9]{Silv2}. In turn, $r$ 
can be of order $q$ as well, for example, if $P$ is a generator 
of the cyclic group of points.

However, the following result,  that is immediate from Lemma~\ref{lem:parper}, 
shows that  the sequence $\chi\(\Psi_n(P)\)$ is 
of smaller period.

\begin{lemma}
\label{lem: per chi}
Let $P \in E(\FF_q)$ be of order $r\ge 3$. Then the sequence 
$\chi\(\Psi_n(P)\)$ 
is periodic with period which  is a divisor of  $R =2r$. 
\end{lemma}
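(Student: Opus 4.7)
The plan is to read off the conclusion directly from Lemma~\ref{lem:parper} after substituting $s = 2$. By that lemma,
\[
\Psi_{2r+k}(P) = a^{2k} b^{4} \Psi_k(P) = \bigl(a^{k} b^{2}\bigr)^{2} \Psi_k(P),
\]
so the ratio $\Psi_{2r+k}(P)/\Psi_k(P)$ (whenever the denominator is nonzero) is a square in $\FF_q^{*}$.

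Before invoking multiplicativity of $\chi$, I need to check that $a, b \in \FF_q^{*}$, since otherwise the expression is not meaningful. Because $P$ has order $r \ge 3$, none of $[r-2]P$, $[r-1]P$, $[2]P$ is the identity, so by the vanishing criterion recalled before Lemma~\ref{lem:parper} the values $\Psi_{r-2}(P), \Psi_{r-1}(P), \Psi_{2}(P)$ are all nonzero. Hence $a$ and $b$ are well-defined nonzero elements of $\FF_q$.

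If $\Psi_k(P) \ne 0$, then multiplicativity of $\chi$ gives
\[
\chi\bigl(\Psi_{2r+k}(P)\bigr) = \chi\bigl((a^{k}b^{2})^{2}\bigr)\chi\bigl(\Psi_k(P)\bigr) = \chi\bigl(\Psi_k(P)\bigr).
\]
If instead $\Psi_k(P) = 0$, the displayed identity forces $\Psi_{2r+k}(P) = 0$ as well, and the convention $\chi(0)=0$ gives the same equality. Either way $\chi(\Psi_{n}(P))$ is periodic with period dividing $R = 2r$, as claimed.

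There is essentially no obstacle here; the only point that requires a moment of care is verifying that $a$ and $b$ are nonzero (so that Lemma~\ref{lem:parper} actually yields an honest equation in $\FF_q$), and the squaring trick $a^{2k}b^{4} = (a^{k}b^{2})^{2}$ which is the whole reason the period halves from $r(q-1)$ down to $2r$ once one passes to the quadratic character.
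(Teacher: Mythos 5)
Your proof is correct and is exactly the deduction the paper has in mind: the paper states the lemma as ``immediate from Lemma~\ref{lem:parper},'' and your substitution $s=2$, the observation that $a^{2k}b^{4}=(a^{k}b^{2})^{2}$ is a square, and the check that $a,b\in\FF_q^{*}$ (plus the $\chi(0)=0$ case) are precisely the details being left implicit. Nothing further is needed.
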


Thus, we see from Lemma~\ref{lem: per chi} that 
bounds of character sums $S_P(N)$  are of interest only for
the values of $N \le R= 2r$.

\section{Prerequisites concerning character sums}

It is well-known  that for an elliptic curve $E$ over $\FF_q$ we have
\[
E(\FF_q) \sim \ZZ / M \ZZ \times \ZZ / L \ZZ
\]
for unique integers $M$ and $L$ satisfying $L \mid M$.  The point $P$ and $Q$ are called \emph{echelonized generators} if $P$ has order $M$, $Q$ has order $L$ and any point of $E(\FF_q)$ can be written in the form $mP + \ell Q$ with $1 \leq m \leq M$ and $1 \leq \ell \le L$.

Let $\Omega = \Hom( E(k),\CC^* )$ be the group of characters on $E(k)$; this is given explicitly by
\[
\Omega = \{ \e_{M}(am) \e_{L}(b\ell)~:~0 \leq a < M, 0 \leq b < L \},
\]
where for a positive integer $K$, we define
\[
\e_K(z) = \exp\(2\pi i z / K\).
\]

The following multiplicative analogue of a result of~\cite{KohShp} 
is essentially~\cite[Proposition~1]{Chen}, 
which in turns comes from~\cite{Perret} (note that in~\cite{Chen}
it is formulated only for prime fields but the proof extends to 
arbitrary fields without any difficulties).

\begin{lemma}
\label{lem:charsum}
Let $\eta$ be a non-principal multiplicative character on $\FF_q^*$.  Let $\KK = \FF_q(E)$ be the function field of an elliptic curve $E$ over $\FF_q$, and $f \in \KK$ be of degree $d$ and such that $f \neq g^m$ for any function $g$ in the algebraic closure $\overline{\KK}$ of $\KK$
 and $m \mid q-1$.  Let $\omega \in \Omega$.
Then
\[
\left| \sum_{P \in E(\FF_q)}\hskip-25pt{\phantom{{M^\ell}}}^*\, \omega(P)\eta(f(P)) \right| \leq 2 d\sqrt{q}
\]
where $\sum^*$ indicates that the sum is over $P \in E(\FF_q)$ such that $f(P) \neq \infty$.
\end{lemma}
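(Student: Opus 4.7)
The plan is to bound the sum by realising it as a weighted count of $\FF_q$-points on a cover of $E$ and applying Weil's Riemann hypothesis for curves (equivalently, Deligne's theorem for $\ell$-adic sheaves on curves).

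First, I would introduce two rank-one $\ell$-adic sheaves on $E$. Let $m$ denote the order of $\eta$, so $m \mid q-1$, and let $\Kcal_\eta$ be the Kummer sheaf on $\GG_m$ whose Frobenius trace at $t\in\FF_q^{*}$ equals $\eta(t)$. The hypothesis that $f$ is not of the form $g^m$ in $\overline{\KK}$ guarantees that the pullback $f^{*}\Kcal_\eta$ is geometrically nontrivial on $U=\{P\in E : f(P)\ne 0,\infty\}$. Using Lang's theorem, I would realise the group character $\omega\in\Omega$ via the Lang isogeny $1-\Frob\colon E\to E$, which is étale Galois with group $E(\FF_q)$; taking the $\omega$-isotypic component of the pushforward of the constant sheaf yields a lisse rank-one sheaf $\Lcal_\omega$ on $E$ which is geometrically trivial and has Frobenius trace $\omega(P)$ at $P\in E(\FF_q)$. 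Set $\Fcal = \Lcal_\omega\otimes f^{*}\Kcal_\eta$.

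Next, I would apply the Grothendieck-Lefschetz trace formula to obtain
\[
\sum_{P\in E(\FF_q)}^{*}\omega(P)\eta(f(P)) = \sum_{i=0}^{2}(-1)^{i}\Tr\bigl(\Frob\mid H^{i}_{c}(U_{\overline{\FF_q}},\Fcal)\bigr).
\]
Since $\Lcal_\omega$ is geometrically trivial, $\Fcal$ is geometrically isomorphic to $f^{*}\Kcal_\eta$, which is a nontrivial lisse sheaf of rank one, so $H^{0}_{c}(U_{\overline{\FF_q}},\Fcal)=H^{2}_{c}(U_{\overline{\FF_q}},\Fcal)=0$. By Deligne's Riemann hypothesis, each Frobenius eigenvalue on $H^{1}_{c}$ has absolute value $\sqrt{q}$, so the sum is bounded by $\dim H^{1}_{c}(U_{\overline{\FF_q}},\Fcal)\cdot\sqrt{q}$.

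For the dimension bound I would use the Grothendieck-Ogg-Shafarevich (Euler-Poincaré) formula. Since $p\nmid q-1$, we have $\gcd(m,p)=1$, so $\Fcal$ is tamely ramified with trivial Swan conductors. Together with $\rank\Fcal=1$, this gives
\[
\chi_{c}(U_{\overline{\FF_q}},\Fcal) = \chi_{c}(U_{\overline{\FF_q}}) = \chi_{c}(E) - |S| = -|S|,
\]
where $S$ is the set of zeros and poles of $f$. Since $f$ has degree $d$, we have $|S|\leq 2d$, whence $\dim H^{1}_{c}\leq 2d$ and the claimed bound $2d\sqrt{q}$ follows.

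The main obstacle is carefully verifying that $\Lcal_\omega$ is geometrically trivial, so that nontriviality of $\Fcal$ reduces to the stated hypothesis on $f$ alone, and so that $\Lcal_\omega$ contributes no ramification. A secondary delicacy is obtaining exactly $2d$ rather than $2d+O(1)$: this relies on $\chi_{c}(E)=0$, the cancellation of the curve's own Euler-characteristic contribution, which is what allows the elliptic-curve estimate to parallel the classical $\PP^{1}$ bound up to the degree of $f$.
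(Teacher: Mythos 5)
Your cohomological strategy (Lang sheaf for $\omega$, Kummer sheaf for $\eta\circ f$, Grothendieck--Lefschetz, Deligne's bound, Grothendieck--Ogg--Shafarevich) is a reasonable route, and the Euler--characteristic bookkeeping giving $\dim H^1_c\le 2d$ and the constant $2d$ is done correctly; note, however, that the paper does not prove this lemma at all (it quotes it from Chen, who relies on Perret), so the comparison is with your argument on its own terms. There is one genuine gap, and it sits exactly at the point you flagged as the main obstacle: the Lang sheaf $\Lcal_\omega$ is \emph{not} geometrically trivial when $\omega$ is nontrivial. The Lang isogeny $1-\Frob\colon E\to E$ is a geometrically connected Galois covering with group $E(\FF_q)$, so $\pi_1(E_{\overline{\FF}_q})$ surjects onto $E(\FF_q)$ and $\Lcal_\omega$ has nontrivial finite geometric monodromy for $\omega\neq 1$ --- just as Artin--Schreier sheaves on $\AA^1$ are geometrically nontrivial. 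Hence $\Fcal=\Lcal_\omega\otimes f^*\Kcal_\eta$ is not geometrically isomorphic to $f^*\Kcal_\eta$, and the vanishing of $H^2_c$ does not reduce to the stated hypothesis on $f$ alone: what is needed is that $f^*\Kcal_\eta$ be geometrically non-isomorphic to $\Lcal_{\omega^{-1}}$, a condition on the \emph{pair} $(f,\omega)$.

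This is not a removable technicality. Take $E\colon y^2=x^3+ax^2+bx$ with $T=(0,0)\in E(\FF_q)$, $f=x$ (degree $2$; not an $m$-th power in $\overline{\FF}_q(E)$ for any $m>1$, since $(T)-(\Ocal)$ is not principal), and $\eta=\chi$ the quadratic character. Descent via the $2$-isogeny shows that $P\mapsto\chi(x(P))$ agrees, away from $\Ocal$ and $T$, with a group character $\omega_0$ of $E(\FF_q)$; whenever some rational point has nonsquare $x$-coordinate, $\omega_0$ is nontrivial, and choosing $\omega=\omega_0$ makes the sum equal to $\#E(\FF_q)-2\approx q$, far exceeding $2d\sqrt q=4\sqrt q$. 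In sheaf language, $\Fcal$ is geometrically trivial there and $H^2_c\neq 0$, which is precisely the case your argument assumes away. So your proof (and indeed the lemma itself, if ``$g\in\overline{\KK}$'' is read as $g\in\overline{\FF}_q(E)$; read literally the hypothesis is vacuous) requires an additional hypothesis or an argument ruling out the case where the Kummer covering attached to $\eta\circ f$ is an unramified isogeny covering matching $\omega$; without it the step ``$H^0_c=H^2_c=0$'' is unjustified.
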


\begin{lemma}
\label{lem:subgroupsum}
Under the assumptions of Lemma~\ref{lem:charsum},  
let $H \subseteq E(\FF_q)$ be a subgroup.  Then
\[
\left| \sum_{P \in H}\hskip-18pt{\phantom{{M^\ell}}}^*\,
 \omega(P)\eta(f(P)) \right| \leq 2 d \sqrt{q}
\]
where $\sum^*$ indicates that the sum is over $P \in H$ such that $f(P) \neq \infty$.
\end{lemma}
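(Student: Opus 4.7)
The plan is to reduce the sum over the subgroup $H$ to a sum over the full group $E(\FF_q)$ by means of the standard orthogonality relation for characters of a finite abelian group. Specifically, let $H^{\perp} = \{\psi \in \Omega : \psi(P) = 1 \text{ for all } P \in H\}$ be the annihilator of $H$ in the dual group $\Omega$. Then $|H^{\perp}| = |E(\FF_q)|/|H|$ and, for any $P \in E(\FF_q)$,
\[
\sum_{\psi \in H^{\perp}} \psi(P) =
\begin{cases} |H^{\perp}|, & P \in H, \\ 0, & P \notin H. \end{cases}
\]

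First I would use this to rewrite the indicator of $H$ and obtain
\[
\sum_{P \in H}\hskip-18pt{\phantom{{M^\ell}}}^*\, \omega(P) \eta(f(P))
= \frac{|H|}{|E(\FF_q)|} \sum_{\psi \in H^{\perp}} \sum_{P \in E(\FF_q)}\hskip-25pt{\phantom{{M^\ell}}}^*\, (\psi\omega)(P) \eta(f(P)),
\]
where both starred sums are restricted to those $P$ where $f(P)\neq\infty$ (note that $P\in H$ contributes nothing when $f(P)=\infty$ in either formulation). Next I would observe that for each fixed $\psi\in H^{\perp}$ the product $\psi\omega$ is again an element of $\Omega$, and since the hypotheses of Lemma~\ref{lem:charsum} constrain only $\eta$ and $f$ (and these are unchanged), that lemma applies with the character $\psi\omega$ in place of $\omega$, giving the bound $2d\sqrt{q}$ for every inner sum uniformly in $\psi$.

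Finally I would apply the triangle inequality in $\psi$ and collect constants:
\[
\left| \sum_{P \in H}\hskip-18pt{\phantom{{M^\ell}}}^*\, \omega(P)\eta(f(P)) \right|
\le \frac{|H|}{|E(\FF_q)|} \cdot |H^{\perp}| \cdot 2d\sqrt{q} = 2d\sqrt{q},
\]
since $|H|\cdot|H^{\perp}| = |E(\FF_q)|$. This is exactly the claimed bound. There is no real obstacle; the only thing to check carefully is that Lemma~\ref{lem:charsum} is applicable for every twist $\psi\omega$, which is automatic because the hypothesis is insensitive to the choice of $\omega\in\Omega$, and that the set of "excluded" points where $f(P) = \infty$ is handled consistently (which it is, since the condition $f(P)\neq\infty$ depends only on $P$, not on the character being evaluated).
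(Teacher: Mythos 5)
Your proposal is correct and is essentially the paper's own proof: both detect membership in $H$ via orthogonality over the annihilator of $H$ in $\Omega$ (the paper's $\Omega_H$, your $H^{\perp}$), apply Lemma~\ref{lem:charsum} to each twisted character $\vartheta\cdot\omega$, and conclude by the triangle inequality with the normalization $|H|\,|H^{\perp}|=|E(\FF_q)|$.
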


\begin{proof}  Let $\Omega_H \subseteq \Omega$ be the subset of characters $\vartheta$ such that $H \subseteq \ker(\vartheta)$.  Then, $\Omega_H$ is dual to $E(\FF_q)/H$, so by the orthogonality property 
of characters of abelian groups, we have
\[
\frac{1}{\left| \Omega_H \right|} \sum_{\vartheta \in \Omega_H} \vartheta(P) = \left\{
\begin{array}{ll}
1 & P \in H, \\
0 & P \notin H.
\end{array} \right.  
\]
Therefore
\begin{align*}
 \sum_{P \in H}\hskip-18pt{\phantom{{M^\ell}}}^*\, \omega(P)\eta(f(P))
&=  \frac{1}{\left| \Omega_H \right|} \sum_{P \in E(\FF_q)}
\hskip-22pt{\phantom{{M^\ell}}}^*\,
 \sum_{\vartheta \in \Omega_H} \vartheta(P) \omega(P)\eta(f(P)) \\
&=  \frac{1}{\left| \Omega_H \right|} \sum_{\vartheta \in \Omega_H} \left( \sum_{P \in E(\FF_q)}\hskip-22pt{\phantom{{M^\ell}}}^*\, \vartheta\cdot\omega(P)\eta(f(P)) \right). \\
\end{align*}
Applying  Lemma~\ref{lem:charsum}, we obtain the desired result.
\end{proof}

\section{Main results}

Here we estimate the incomplete sum $S_P(N)$. Following 
the standard approach we start with estimates of complete 
sums twisted with an additive character.

As before, let  $R = 2r$  where $r$ is the order of $P$.
Then for an integer $a$ we define the sums
$$
T_P(a) = \sum_{n=1}^R \chi(\Psi_n(P))\e_R(an).
$$
which can be of independent interest. 

\begin{theorem}
\label{thm:compl sums} For any integer $a$, we have
\[
T_P(a)=O\(R^{5/6}q^{1/12} (\log q)^{1/3}\).
\]
\end{theorem}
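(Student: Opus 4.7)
The plan is to extract from the Ward recurrence~\eqref{eqn:rec} an identity that rewrites $\Psi_{n+k}(P)\Psi_{n-k}(P)$ as a square times a difference of $x$-coordinates on $E$, use it to convert shifted correlations of $\chi(\Psi_n(P))$ into character sums on the subgroup $\langle P\rangle \subset E(\FF_q)$, apply Cauchy--Schwarz with a smoothing parameter, and finally bound the resulting curve sums via Lemma~\ref{lem:subgroupsum}.

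Specialising~\eqref{eqn:rec} to $j=1$ and using $\Psi_1(P)=1$ together with $\Psi_{1-h}(P)=-\Psi_{h-1}(P)$ gives
\[
\Psi_{n+k}(P)\Psi_{n-k}(P) = \Psi_{n+1}(P)\Psi_{n-1}(P)\Psi_k(P)^2-\Psi_{k+1}(P)\Psi_{k-1}(P)\Psi_n(P)^2.
\]
The classical relation $\Psi_{n-1}(P)\Psi_{n+1}(P)=\Psi_n(P)^2\bigl(x(P)-x([n]P)\bigr)$ (valid for $\Psi_n(P)\neq 0$) collapses this to
\[
\Psi_{n+k}(P)\Psi_{n-k}(P) = \Psi_n(P)^2\Psi_k(P)^2\bigl(x([k]P)-x([n]P)\bigr),
\]
so that, writing $a_n=\chi(\Psi_n(P))$, applying the quadratic character yields
\[
a_{n+k}\,a_{n-k} = \chi\bigl(x([k]P)-x([n]P)\bigr)
\]
whenever $\Psi_n(P)\Psi_k(P)\neq 0$.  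This is the engine of the argument.

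Now fix a parameter $K$ with $1\le K<r$.  Using the $R$-periodicity of $a_n\e_R(an)$ from Lemma~\ref{lem: per chi}, one has $K\,T_P(a)=\sum_{k=1}^K\sum_{n=1}^R a_{n+2k}\,\e_R(a(n+2k))$; pulling $\e_R(an)$ out and applying Cauchy--Schwarz in $n$ yields
\[
K^2|T_P(a)|^2 \le R\sum_{k,k'=1}^K \e_R\bigl(2a(k-k')\bigr)\sum_{n=1}^R a_{n+2k}\,a_{n+2k'}.
\]
The diagonal $k=k'$ contributes at most $KR$.  For $k\neq k'$, shifting the summation index rewrites the inner sum as $\sum_n a_n\,a_{n+2j}$ with $j=k'-k$, and the identity above converts this, up to an $O(1)$ error from vanishing division polynomials, into $\sum_{n=1}^R \chi\bigl(x([j]P)-x([n+j]P)\bigr)$.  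Since $[n+j]P$ traverses $\langle P\rangle$ exactly twice as $n$ runs through $R=2r$ consecutive integers, this equals $2\sum_{Q\in\langle P\rangle\setminus\{\Ocal\}}\chi\bigl(x([j]P)-x(Q)\bigr)+O(1)$.

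The function $f(Q)=x([j]P)-x(Q)$ on $E$ has degree $2$ and divisor $(R_0)+(-R_0)-2(\Ocal)$ (for any $R_0$ with $x(R_0)=x([j]P)$) of coprime multiplicities, so $f$ is not an $m$-th power in $\overline{\KK}$ for any $m\mid q-1$.  Lemma~\ref{lem:subgroupsum} then gives $|\sum_{n} a_{n+2k}\,a_{n+2k'}|\ll\sqrt{q}$, so that
\[
K^2|T_P(a)|^2 \ll KR^2 + K^2 R\sqrt{q}, \qquad\text{i.e.}\qquad |T_P(a)|^2\ll R^2/K+R\sqrt{q}.
\]
Optimising $K$ in the admissible range $1\le K<r$, together with careful handling of the $O(1)$ boundary terms where a division polynomial vanishes (which both constrains $K$ and, through a completion step needed to pass uniformly between incomplete and complete subgroup sums, introduces a logarithmic loss), yields the claimed bound.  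The main obstacle is precisely this bookkeeping of the boundary and completion terms: the Ward-type identity reduces the hard correlation to a tractable curve sum, but the interplay between the constraint $K<r$, the vanishing loci of $\Psi_n$ and $\Psi_k$, and the completion of the subgroup sum is what produces both the final exponents and the $(\log q)^{1/3}$ factor.
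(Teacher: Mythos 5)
Your argument is essentially correct, but it is a genuinely different proof from the one in the paper. The paper never uses the Ward recurrence or the $x$-coordinate identity: it dilates the index multiplicatively, writing $T_P(a)=\sum_n\chi(\Psi_{\ell n}(P))\e_R(a\ell n)$ for each odd prime $\ell<L$ with $\ell\nmid R$, averages over such $\ell$, and uses Lemma~\ref{lem: nm} ($\Psi_{\ell n}(P)=\Psi_\ell(nP)\Psi_n(P)^{\ell^2}$) so that after Cauchy--Schwarz the inner sums become character sums of $\Psi_{\ell_1}\Psi_{\ell_2}(nP)$, functions of degree up to $L^2$; Lemma~\ref{lem:subgroupsum} then gives $O(L^2\sqrt q)$ per pair, and optimizing $L\asymp R^{1/3}q^{-1/6}(\log q)^{1/3}$ produces the stated exponents and the logarithm. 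You instead shift additively (a van der Corput/correlation argument) and use $\Psi_{m+j}\Psi_{m-j}=\Psi_m^2\Psi_j^2\bigl(x([j]P)-x([m]P)\bigr)$ to reduce each off-diagonal correlation to a character sum of the \emph{degree-two} function $x([j]P)-x(Q)$ over $\langle P\rangle$, which is $O(\sqrt q)$ with no degree penalty. This buys you more: with $K=r-1$ your inequality $|T_P(a)|^2\ll R^2/K+R\sqrt q$ gives $T_P(a)=O(R^{1/2}q^{1/4})$, which is sharper than the theorem throughout the nontrivial range $R\ge q^{1/2}$ and needs no logarithm; the theorem as stated then follows since for $R\le q^{1/2}(\log q)^2$ the trivial bound $|T_P(a)|\le R$ already beats $R^{5/6}q^{1/12}(\log q)^{1/3}$.

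Two points to tighten. First, your closing paragraph is misleading: no ``completion step'' or logarithmic loss occurs in your argument (the subgroup sum is already complete), so the $(\log q)^{1/3}$ should not be ``derived'' from bookkeeping; instead, finish as above by taking $K=r-1$ and splitting on the size of $R$. Second, the claim that the divisor of $x([j]P)-x(Q)$ has coprime multiplicities fails exactly when $[j]P$ is two-torsion, i.e.\ when $r$ is even and $|k-k'|=r/2$; either note that the function is still not a power in $\bar\FF_q(E)$ because $([j]P)-(\Ocal)$ is not principal, or simply bound those at most $2K$ pairs trivially by $R$, which does not affect the final estimate. The $O(1)$ terms where $\Psi_{n+j}(P)=0$ are handled correctly.
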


\begin{proof}
Let $a \in \ZZ$.   Fix an integer $L\ge 3$ and 
let $\Lcal$ denote the set of odd primes $\ell$ such that $\ell<L$ and 
$\ell \nmid R$. Since $R$ has at most $O(\log R) = O(\log q)$ 
prime divisors we see, say, for  
\begin{equation}
\label{eqn: L large}
L \ge (\log q)^2
\end{equation}
and  sufficiently large $q$ we have
\begin{equation}
\label{eqn: l}
\#\Lcal   \geq \frac{L}{2\log L}.
\end{equation}

Let $\ell \in \Lcal$.  As $n$ runs through the all
residue  classes   modulo $R$, so does $\ell n$. 
Since both  sequences $\chi(\Psi_{n}(P))$ 
and $\e_R(a n)$, $n =1, 2, \ldots$,  
are periodic with period $R$,   we have
\[
T_P(a)
= 
\sum_{n=1}^R \chi(\Psi_{\ell n}(P))\e_R(a\ell n)
.
\]
We average over all choices of $\ell \in \Lcal$ and set
\[
W = \sum_{\ell \in \Lcal}  \sum_{n=1}^R  \chi(\Psi_{\ell n}(P))\e_R(a\ell n)  .
\]
Then we have
\begin{equation}
\label{eq:T and W}
T_P(a) = \frac{1}{\#\Lcal} W.
\end{equation}

To estimate $W$, we change the order of summation, and then apply the Cauchy inequality:
\[
\left| W \right|^2 \leq R \sum_{n=1}^R \left| \sum_{\ell \in \Lcal} \chi(\Psi_{\ell n}(P))\e_R(a\ell n) \right|^2.
\]
Now we apply Lemma~\ref{lem: nm}:
\begin{align*}
\left| W \right|^2 &\leq R \sum_{n=1}^R \left| \sum_{\ell \in \Lcal} \chi(\Psi_{\ell n}(P))\e_R(a\ell n) \right|^2 \\
 &=   R \sum_{n=1}^R  \left| \sum_{\ell \in \Lcal} \chi(\Psi_{\ell}(nP))\chi(\Psi_{n}(P)^{\ell^2})\e_R(a\ell n) \right|^2. 
  \end{align*}
Since $\chi$ is the quadratic character and $\ell$ is odd, we have 
\begin{equation}
\label{eq: no l}
\chi(\Psi_{n}(P)^{\ell^2}) = \chi(\Psi_{n}(P)).
\end{equation}
 Therefore, 
\begin{align*}
\left| W \right|^2   
 &\le R \sum_{n=1}^R \left| \chi(\Psi_n(P)) \right|^2 \left| \sum_{\ell \in \Lcal} \chi(\Psi_{\ell}(nP))\e_R(a\ell n) \right|^2\\
 &\le  R \sum_{n=1}^R \left| \sum_{\ell \in \Lcal} \chi(\Psi_{\ell}(nP))\e_R(a\ell n) \right|^2.
\end{align*}

Expanding the square and switching the order of summation again, 
we obtain
\begin{align*}
\left| W\right |^2 
&\leq  R \sum_{n=1}^R \sum_{\ell_1,\ell_2 \in \Lcal} \chi(\Psi_{\ell_1}(nP))\e_R(a\ell_1n)\chi(\Psi_{\ell_2}(nP))\e_R(-a\ell_2n)\\
&=  R\sum_{\ell_1,\ell_2 \in \Lcal} \sum_{n=1}^R \chi\(\Psi_{\ell_1}(nP)\Psi_{\ell_2}(nP)\)\e_R(a(\ell_1-\ell_2)n).
\end{align*}

We now turn to bounding the inner sum.  

For $\ell_1=\ell_2=\ell$, we have the trivial estimate
\[
\sum_{n=1}^R \chi(\Psi_\ell(nP)^2) < R .
\]

For $\ell_1\neq \ell_2$ we use Lemma~\ref{lem:subgroupsum}.  The degree of $\Psi_\ell(P)$ (considered as a function in the function field of  $E$)  is $(\ell^2-1)/2$, so the degree of 
$\Psi_{\ell_1}(P)\Psi_{\ell_2}(P)$ is 
$$
\frac{(\ell_1^2+\ell_2^2-2)}{2} < L^2-1.
$$  
It is also easy to see (by examining its zeros) 
that $\Psi_{\ell_1}(P)\Psi_{\ell_2}(P)$
is not a square of another function from the same function 
field. Since by Lemma~\ref{lem: per chi} we have $R\mid 2r$, 
we see that
$$
\sum_{n=1}^R \chi\(\Psi_{\ell_1}(nP)\Psi_{\ell_2}(nP)\)\e_R(a(\ell_1-\ell_2)n) 
= O(L^2q^{1/2}). 
$$
Thus, we obtain
\[
\left|W\right|^2 = O\(R^2  \# \Lcal  +   R  L^2 \sqrt{q}  (\# \Lcal)^2\). 
\]
Substituting this bound in~\eqref{eq:T and W} and using~\eqref{eqn: l}, 
we derive
\begin{align*}
T_P(a) & =   O\(R  (\# \Lcal)^{-1/2}  +  q^{1/4} R^{1/2} L \) \\
& =    O\(R L^{-1/2}(\log L)^{1/2}  +   q^{1/4} R^{1/2} L \).
\end{align*}

We no choose  $L = \fl{R^{1/3}q^{-1/6} (\log q)^{1/3}}$, 
thus~\eqref{eqn: L large} is satisfied, provided that $q$ is 
large enough which implies the desired estimate. 
\end{proof}

We remark that Theorem~\ref{thm:compl sums} is
nontrivial if $R \ge q^{1/2 + \varepsilon}$ for 
a fixed $\varepsilon > 0$ (we recall that the largest 
possible value of $R$ is of order $q$).

Now using the standard reduction between complete and 
incomplete sums, see~\cite[Section~12.2]{IwKow},  we obtain

\begin{corollary}
\label{cor: imcompl sums}
For any $N \le R$, we have, 
\[
S_P(N) =
O( R^{5/6}q^{1/12} (\log q)^{4/3}).
\]
\end{corollary}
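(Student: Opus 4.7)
The plan is to apply the standard completion trick to pass from the complete sums $T_P(a)$ bounded in Theorem~\ref{thm:compl sums} to the incomplete sum $S_P(N)$, exploiting the fact from Lemma~\ref{lem: per chi} that $\chi(\Psi_n(P))$ is periodic of period dividing $R$. Since $N \leq R$, the indicator of the interval $\{1,\ldots,N\}$ modulo $R$ admits an expansion in terms of the additive characters $\e_R$, which converts $S_P(N)$ into a weighted sum of the $T_P(a)$.

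First I would use the orthogonality relation for additive characters modulo $R$ to write
\[
S_P(N) = \frac{1}{R} \sum_{a=0}^{R-1} T_P(a) \sum_{m=1}^N \e_R(-am).
\]
The $a=0$ term contributes $(N/R)\,T_P(0)$, which by $N \leq R$ and Theorem~\ref{thm:compl sums} is already $O(R^{5/6}q^{1/12}(\log q)^{1/3})$. For the nonzero frequencies, I would use the classical geometric-sum estimate
\[
\left| \sum_{m=1}^N \e_R(-am) \right| \leq \frac{1}{2\|a/R\|},
\]
summed over $1 \leq a \leq R-1$ to yield $O(R \log R)$, and combine with the uniform bound $|T_P(a)| = O(R^{5/6}q^{1/12}(\log q)^{1/3})$ from Theorem~\ref{thm:compl sums}.

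Putting the two pieces together gives
\[
S_P(N) = O\!\left( R^{5/6}q^{1/12}(\log q)^{1/3} + \frac{1}{R} \cdot R^{5/6}q^{1/12}(\log q)^{1/3} \cdot R\log R \right),
\]
and using $\log R = O(\log q)$ this simplifies to $O(R^{5/6}q^{1/12}(\log q)^{4/3})$, as desired.

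I do not expect any real obstacle: this is a textbook application of completion of sums, explicitly cited to~\cite[Section~12.2]{IwKow}. The only point to monitor is the cancellation of the $1/R$ factor against the harmonic sum $\sum_{a=1}^{R-1} R/(2a) \ll R\log R$, which explains precisely why one loses a single extra factor of $\log q$ compared with the complete-sum bound and therefore produces the exponent $(\log q)^{4/3}$ rather than $(\log q)^{1/3}$.
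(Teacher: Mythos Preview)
Your proposal is correct and is precisely the standard reduction the paper invokes via~\cite[Section~12.2]{IwKow}: you Fourier-expand the indicator of $\{1,\dots,N\}$ modulo $R$, express $S_P(N)$ as a weighted average of the $T_P(a)$, and combine the uniform bound of Theorem~\ref{thm:compl sums} with the geometric-sum estimate to lose exactly one $\log R = O(\log q)$ factor. Nothing further is needed.
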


\section{Comments}

In principle, our approach works for sums of multiplicative characters of 
arbitrary order  $d \mid q-1$. In this case, Lemma~\ref{lem: per chi} 
needs some obvious adjustments. Furthermore, the set $\Lcal$ in the proof 
of  Theorem~\ref{thm:compl sums} has to be chosen to consist
of primes $\ell \equiv \pm 1 \pmod d$, so~\eqref{eq: no l} still holds. 
For any fixed $d$ the final
result is the same, however its strength diminishes as $d$ grows, and
for example, for  characters of order $q-1$ leads only 
to a trivial estimate. Although we do not see any immediate 
cryptographic significance of such a result, obtaining nontrivial estimates
of character sums with arbitrary multiplicative characters is a
natural and interesting question. A related open question  is obtaining 
nontrivial estimates on similar sums of additive characters
of $\FF_q$. In this case, there is no natural analogue 
of~\eqref{eq: no l} and thus our approach does not apply at all.

Finally, we mention an algorithmic question which can be of 
cryptographic relevance. Given a black-box which for every 
integer $n$ outputs $\chi(\Psi_{n}(P))$, the question is to recover the 
``hidden'' point $P$. This admits several modifications 
depending whether the curve $E$ and the field $\FF_q$ are 
known or not. This question is analoguous to the more studied
cryptographic problem of recovering a hidden polynomial 
$f(X) \in \FF_q[X]$
given a black-box which outputs $\chi(f(n))$; see~\cite{RusShp}
and references therein.

\section*{Acknowledgement}

The authors would like to thank  Kristin Lauter who 
connected them together and stimulated their joint 
work on this paper. 
The authors are also grateful
to the Fields Institute 
for its support and stimulating atmosphere which 
led to the initiation of this work at 
the ``Fields Cryptography Retrospective Meeting'', 
Toronto, May 2009. 

During the preparation of this paper,  
I.~S. was supported in part by ARC
Grant~DP0881473 and K.~S. was supported  in part by 
NSF Fellowship 0802915 and NSERC PDF-373333.

\end{document}